\date{}
\theoremstyle{plain}
      \newtheorem{theorem}{Theorem}[section]
      \newtheorem{lemma}[theorem]{Lemma}
            \newtheorem{claim}[theorem]{Claim}
      \newtheorem{observation}[theorem]{Observation}
\theoremstyle{definition}
      \newtheorem{definition}[theorem]{Definition}
\theoremstyle{remark}
	\newcommand{\RR}{{\mathbb R}}
\title{Semi-algebraic colorings of complete graphs}
\author{Jacob Fox\thanks{Stanford University, Stanford, CA. Supported by a Packard Fellowship, by NSF CAREER award DMS 1352121. Email: {\tt jacobfox@stanford.edu.}} \and J\'anos Pach\thanks{EPFL, Lausanne and Courant Institute, New York, NY. Supported
by Swiss National Science Foundation Grants 200020-162884 and 200021-175977. Email:
{\tt pach@cims.nyu.edu}.}\and  Andrew Suk\thanks{Department of Mathematics, University of California at San Diego, La Jolla, CA, 92093 USA. Supported an NSF CAREER award and an Alfred Sloan Fellowship. Email: {\tt asuk@ucsd.edu}.} }
\begin{document}

\maketitle

\begin{abstract}

We consider $m$-colorings of the edges of a complete graph, where each color class is defined semi-algebraically with bounded complexity. The case $m = 2$ was first studied by Alon et al., who applied this framework to obtain surprisingly strong Ramsey-type results for intersection graphs of geometric objects and for other graphs arising in computational geometry. Considering larger values of $m$ is relevant, e.g., to problems concerning the number of distinct distances determined by a point set.

For $p\ge 3$ and $m\ge 2$, the classical Ramsey number $R(p;m)$ is the smallest positive integer $n$ such that any $m$-coloring of the edges of $K_n$, the complete graph on $n$ vertices,  contains a monochromatic $K_p$. It is a longstanding open problem that goes back to Schur (1916) to decide whether $R(p;m)=2^{O(m)}$, for a fixed $p$. We prove that this is true if each color class is defined semi-algebraically with bounded complexity.  The order of magnitude of this bound is tight. Our proof is based on the Cutting Lemma of Chazelle {\em et al.}, and on a Szemer\'edi-type regularity lemma for multicolored semi-algebraic graphs, which is of independent interest.
The same technique is used to address the semi-algebraic variant of a more general Ramsey-type problem of Erd\H{o}s and Shelah.


\end{abstract}

\section{Introduction}

The Ramsey number $R(p;m)$ is the smallest integer $n$ such that any $m$-coloring on the edges of the complete $n$-vertex graph contains a monochromatic copy of $K_p$.  The existence of $R(p;m)$ follows from the celebrated theorem of Ramsey \cite{R30} from 1930, and for the special case when $p = 3$, Issai Schur proved the existence of $R(3;m)$ in 1916 in his work related to Fermat's Last Theorem~\cite{schur}. He showed that
        $$\Omega(2^m) \le R(3;m) \le O(m!).$$
While the upper bound has remained unchanged over the last 100 years, the lower bound was successively improved and the current record is $R(3;m) \geq \Omega(3.199^m)$ due to Xiaodong \emph{et al.}~\cite{fred}.  It is a major open problem in Ramsey theory, for which Erd\H os offered some price money, to close the gap between the lower and upper bounds for $R(3;m)$.

In this paper, we study edge-colorings of complete graphs where each color class is defined algebraically with bounded complexity. It is known that several classic theorems in graph theory can be improved for intersection graphs of geometric objects of bounded ``description complexity'' or bounded VC-dimension, graphs of incidences between points and hyperplanes, distance graphs, and, more generally, for semi-algebraic graphs \cite{alon,FPS,CFPSS,FPSSZ,suk}. To make this statement more precise, we need to introduce some terminology.  Let $V$ be an ordered point set in $\RR^d$, and let $E\subset {V\choose 2}$.  We say that $E$ is a \emph{semi-algebraic} relation on $V$ with \emph{complexity} at most $t$ if there are at most $t$ polynomials $g_1,\ldots,g_s\in \RR[x_1,\ldots,x_{2d}]$, $s\leq t$, of degree at most $t$ and a Boolean formula $\Phi$ such that for vertices $u,v \in V$ such that $u$ comes before $v$ in the ordering,

$$(u,v) \in E \hspace{.5cm}\Leftrightarrow\hspace{.5cm} \Phi(g_{1}(u,v) \geq 0;\ldots;g_{s}(u,v) \geq 0) = 1.$$

\noindent  At the evaluation of $g_{\ell}(u,v)$, we substitute the variables $x_1,\ldots,x_d$ with the coordinates of $u$, the variables $x_{d+1},\ldots,x_{2d}$ with the coordinates of $v$.  We may assume that the semi-algebraic relation $E$ is \emph{symmetric}, i.e., for all points $u,v \in \RR^d$, $(u,v) \in E$ if and only if $(v,u) \in E$.  Indeed, given such an ordered point set $V\subset \RR^d$ and a not necessarily symmetric semi-algebraic relation $E$ of complexity at most $t$, we can define $V^{\ast}\subset \RR^{d +1}$ with points $(v,i)$ where $v \in V$ and $v$ is the $i$th smallest element in the given ordering of $V$.  Then we can define a symmetric semi-algebraic relation $E^{\ast}$ on the pairs of $V^{\ast}$ with complexity at most $2t + 2$, by comparing the value of the last coordinates of the two points, and checking the relation $E$ using the first $d$ coordinates of the two points.  We will therefore assume throughout this paper that all semi-algebraic relations we consider are symmetric, and the vertices are not ordered.  Hence, all edges are unordered and we denote $uv = \{u,v\}$.  We also assume that the dimension $d$ and complexity $t$ are fixed parameters, and $n =|V|$ tends to infinity.

Let $R_{d,t}(p;m)$ be the minimum $n$ such that every $n$-element point set $V$ in $\RR^d$ equipped with $m$ semi-algebraic binary relations (edge-colorings) $E_1,\ldots, E_m \subset {V\choose 2}$, each of complexity at most $t$, where $E_1\cup \cdots \cup E_m = {V\choose 2}$, contains a subset $S\subset V$ of size $p$ such that ${S\choose 2} \subset E_k$ for some $k$.  Clearly, $R_{d,t}(p;m) \leq R(p;m)$.  For this setting, it was known that for fixed $d,t \geq 1$, $R_{d,t}(3;m) = 2^{O(m\log\log m)}$, which is much smaller than Schur's bound $R(3;m)=O(m!)$ mentioned in the first paragraph of the Introduction; see~\cite{suk}. In this paper, we completely settle Schur's problem for semi-algebraic graphs, by showing that in this setting Schur's lower bound is tight. In fact, we prove this in a more general form, for any $p\ge 3$.

\begin{theorem}\label{color}
For fixed integers $d,t\geq 1$ and $p\geq 3$, we have

$$R_{d,t}(p;m) = 2^{O(m)}.$$

\end{theorem}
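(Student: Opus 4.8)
\medskip
\noindent\textbf{Proof approach.}
I would prove this by a recursion on the number of colors, using Theorem~\ref{reg1} as the engine and the bound $R_{d,t}(3;m)=2^{O(m\log\log m)}$ of \cite{suk} as the base of an induction on $p$ (equivalently, by re-running the argument of \cite{suk} with $p$ carried as a parameter throughout). The benchmark to beat is the classical Erd\H{o}s--Szekeres bound $R(p;m)\le m^{O(pm)}$, which one also obtains in the semi-algebraic setting by the straightforward ``grow a monochromatic clique, passing at each step to the majority-color neighborhood and losing a factor $m$'' argument that needs $\approx pm$ growth steps. The point of the theorem is that this factor-$m$ loss can be reduced using semi-algebraic structure: the target exponent $pm\log\log m$ (rather than $pm\log m$) is exactly what one gets if each growth step costs only a $\mathrm{polylog}(m)$ factor. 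So the heart of the matter is a lemma asserting that, in a semi-algebraic $m$-coloring of $K_n$ with no monochromatic $K_p$, one can pass to a subset of size $\ge n/\mathrm{polylog}(m)$ that either supports a growth step ``for free'' or uses strictly fewer colors; iterating it down to a constant number of colors and summing the $O(p\log\log m)$ contributions over the $m$ colors would give $\log R_{d,t}(p;m)=O(pm\log\log m)$.

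Such a lemma should come from Theorem~\ref{reg1}. Applying it with $\varepsilon$ a small negative power of $\log m$ (the exponent proportional to $p$) yields an equitable partition into $K\le (m/\varepsilon)^{c}$ parts, all but an $\varepsilon$-fraction of whose pairs are complete in a single color; deleting the bad pairs is harmless since $\varepsilon\ll\binom p2^{-1}$. Assuming there is no monochromatic $K_p$, the reduced majority-coloring of the $K$ parts has no $p$ pairwise-monochromatic parts of a common color, so each of its color classes is $K_p$-free. The plan is to combine this with a Tur\'an/Ramsey analysis of the reduced coloring --- together with the $p=3$ estimate of \cite{suk} applied to small groups of parts, to handle the monochromatic triangles of parts that Tur\'an's theorem cannot rule out --- in order to isolate a color $i$ and a large sub-union of the parts across which color $i$ never joins two distinct parts, then to finish with a short recursive cleanup of the color-$i$ edges lying inside individual parts. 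The crucial quantitative input is that $K$ depends only \emph{polynomially} on $m/\varepsilon$: this is what makes the whole scheme viable, allowing $\varepsilon$ to be taken this small while keeping the accumulated cost of the cleanup polylogarithmic in $m$.

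The main obstacle is precisely this last point. Several natural implementations of ``remove a color'' only produce a loss of $m^{O(1)}$ per color --- for instance passing to the independent union of parts given by a rarest color in the reduced graph, or passing to the majority-color neighborhood of a single vertex --- and any such scheme merely reproduces the classical $m^{O(pm)}$. The delicate task is to organize the passage to the reduced structure and the cleanup so that the per-color (equivalently, per-growth-step) loss is genuinely $\mathrm{polylog}(m)$, and to do so while simultaneously controlling the three running quantities: the number of colors still present, the size of the current working set, and the accumulated error from deleted bad pairs. Once a recursion of the shape $R_{d,t}(p;m)\le \mathrm{polylog}(m)^{O(p)}\cdot R_{d,t}(p;m-1)$ is established, unwinding it (using $\sum_{j\le m}\log\log j=O(m\log\log m)$) together with the base cases $p=3$ (from \cite{suk}) and $m=O(1)$ (trivial) gives $R_{d,t}(p;m)=2^{O(pm\log\log m)}$; everything else is routine bookkeeping.
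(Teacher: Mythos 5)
Your proposal stops short of the theorem's actual content: the entire weight rests on a recursion of the shape $R_{d,t}(p;m)\le \mathrm{polylog}(m)^{O(p)}\cdot R_{d,t}(p;m-1)$, and you explicitly leave open the ``delicate task'' of making each color-removal (or growth) step cost only $\mathrm{polylog}(m)$ rather than $m^{O(1)}$. That is precisely the hard part, and the route you sketch through Theorem~\ref{reg1} does not obviously deliver it: after the regularity partition into $K\le(m/\varepsilon)^{c}$ parts, each color class of the reduced coloring on the $K$ parts is merely $K_p$-free, so Tur\'an/Ramsey considerations only guarantee, for a given color $i$, an independent set of about $K^{1/(p-1)}$ parts avoiding color $i$ across distinct parts --- a polynomial, not polylogarithmic, loss, which (as you yourself note) just reproduces the classical $m^{O(pm)}$ bound. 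So as written there is a genuine gap: the key lemma is named but neither proved nor given a mechanism that could plausibly prove it.

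The paper's proof does not use Theorem~\ref{reg1} at all and is organized around a different recursion. It strengthens the statement to the asymmetric bound $R_{d,t}(p_1,\ldots,p_m)\le 2^{c(\sum_i p_i)\log\log(m+2)}$ (Theorem~\ref{colori}) and inducts on $s=\sum_i p_i$, not on the number of colors. One applies the cutting lemma (Lemma~\ref{cut}) directly to the $tmn$ surfaces with $r=2tm$, obtaining a cell $\Delta$ containing $|V_1|\ge n/(c_1(2tm)^{2d})$ points and a set $V_2$ of at least $n/4$ points none of whose surfaces cross $\Delta$; every $v\in V_2$ is then joined to all of $V_1$ in a single color. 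The saving comes from a dichotomy on $m_0$, the number of colors arising this way. If $m_0>\log m$, then $V_1$ contains no monochromatic $K_{p_i-1}$ in any of these $m_0$ colors, so the inductive bound on $|V_1|$ improves by a factor $2^{cm_0\log\log(m+2)}$, which beats the $\mathrm{poly}(m)$ loss from passing to the cell; if $m_0\le\log m$, pigeonhole gives $V_3\subseteq V_2$ with $|V_3|\ge n/(4\log m)$ completely joined to $V_1$ in one color, so a unit decrease of $s$ costs only $\log(4\log m)=\log\log m+O(1)$ in the exponent. Summing over at most $s\le pm$ such steps yields $2^{O(pm\log\log m)}$; the $\log\log m$ per step enters through this pigeonhole over at most $\log m$ colors, not through a per-color cleanup of a regularity partition. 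If you want to salvage your outline, you would essentially have to rediscover this mechanism (or prove your missing lemma some other way); the regularity lemma is used elsewhere in the paper (for Theorem~\ref{genramsey}), but not here.
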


\smallskip Our proof uses geometric techniques and is based on the Cutting Lemma of Chazelle, Edelsbrunner, Guibas, and Sharir~\cite{chazelle} described in Section~\ref{cuttinglemma}.

Edge-colorings of semi-algebraic graphs with $m$ colors can be used, e.g., for studying problems concerning the number of distinct distances determined by a point set; see \cite{distdist}. One can explore the fact that multicolored semi-algebraic graphs have a very nice structural characterization, reminiscent of Szemer\'edi's classic regularity lemma for general graphs \cite{S}, but possessing much stronger homogeneity properties. Our next theorem provides such a characterization, which is of independent interest. To state our result, we need some notation and terminology.

A partition is called {\em equitable} if any two parts differ in size by at most one. According to Szemer\'edi's lemma, for every $\varepsilon>0$ there is a $K=K(\varepsilon)$ such that the vertex set of every graph has an equitable  partition into at most $K$ parts such that all but at most an $\varepsilon$-fraction of the pairs of parts are $\varepsilon$-regular.\footnote{For a pair $(V_i,V_j)$ of vertex subsets, $e(V_i,V_j)$ denotes the number of edges in the graph running between $V_i$ and $V_j$. The density $d(V_i,V_j)$ is defined as $\frac{e(V_i,V_j)}{|V_i||V_j|}$. The pair $(V_i,V_j)$ is called $\varepsilon$-regular if for all $V_i' \subset V_i$ and $V_j' \subset V_j$ with $|V_i'| \geq \varepsilon |V_i|$ and $|V_j'| \geq \varepsilon |V_j|$, we have $|d(V'_i,V'_j)-d(V_i,V_j)| \leq \varepsilon$.} It follows from Szemer\'edi's proof that $K(\varepsilon)$ may be taken to be an exponential tower of $2$s of height $\varepsilon^{-O(1)}$. Gowers \cite{Go97} used a probabilistic construction to show that such an enormous bound is indeed necessary.

Alon~{\em et al.}~\cite{alon} (see also Fox, Gromov~{\em et al.}~\cite{gromov}) established a strengthening of the regularity lemma for point sets in $\RR^d$ equipped with a semi-algebraic relation $E$.  It was shown in \cite{alon} that for any semi-algebraic graph of bounded complexity  defined on the vertex set $V\subset \RR^d$ (that is, for any semi-algebraic binary relation $E\subset {V\choose 2}$), $V$ has an equitable partition into a bounded number of parts such that all but at most an $\varepsilon$-fraction of the pairs of parts $(V_1,V_2)$ behave not only regularly, but {\em homogeneously} in the sense that either $V_1\times V_2\subseteq E$ or $V_1\times V_2\cap E=\emptyset$. The first proof of this theorem was essentially qualitative: it gave a poor estimate for the number of parts in such a partition.  Fox, Pach, and Suk \cite{FPS} gave a stronger quantitative form of this result, showing that the number of parts can be taken to be polynomial in $1/\varepsilon$.

Let $V$ be an $n$-element point set in $\RR^d$ equipped with $m$ semi-algebraic relations $E_1,\ldots, E_m$ such that $E_1\cup \cdots \cup E_m = {V\choose 2}$ of bounded complexity. In other words, suppose that the edges of the complete graph on $V$ are colored with $m$ colors, where each color class is semi-algebraic.  Then, for any $\varepsilon > 0$, an $m$-fold repeated application of the result of Fox, Pach, and Suk \cite{FPS} gives an equitable partition of $V$ into at most $K \leq (1/\varepsilon)^{cm}$ parts such that all but an $\varepsilon$-fraction of the pairs of parts are complete with respect to some relation $E_k$, i.e., all edges between the two parts are of color $k$, for some $k$.  In Section \ref{structure}, we strengthen this result by showing that the number of parts can be taken to be {\em polynomial} in $m/\varepsilon$.

\begin{theorem}
\label{reg1}
For any positive integers $d,t\geq 1$ there exists a constant $c=c(d,t)>0$ with the following property. Let $0<\varepsilon < 1/2$ and let $V$ be an $n$-element point set in $\RR^d$ equipped with semi-algebraic relations $E_1,\ldots, E_m$ such that each $E_k$ has complexity at most $t$ and ${V\choose 2} = E_1\cup \cdots\cup E_m$.  Then $V$ has an equitable partition $V = V_1\cup \cdots \cup V_K$  into at most $4/\varepsilon \leq K\leq (m/\varepsilon)^c$ parts such that all but an $\varepsilon$-fraction of the pairs of parts are complete with respect to some relation $E_k$.
\end{theorem}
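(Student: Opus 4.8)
The plan is to reduce Theorem~\ref{reg1} to a statement about simultaneous homogeneity with respect to each \emph{individual} relation, and then to prove that statement by carrying out the polynomial‑partitioning argument behind the single‑relation theorem of Fox, Pach, and Suk for all $m$ relations at once. \emph{The reduction.} For a pair of parts $(V_a,V_b)$ let $d_i(V_a,V_b)$ be the density of $E_i$ between them. Since ${V\choose 2}=E_1\cup\cdots\cup E_m$, every pair between $V_a$ and $V_b$ lies in some $E_i$, so $\sum_{i=1}^m d_i(V_a,V_b)\ge 1$; in particular $\max_i d_i(V_a,V_b)\ge 1/m>0$. Hence if $(V_a,V_b)$ is homogeneous with respect to every $E_i$ — i.e.\ each $d_i(V_a,V_b)\in\{0,1\}$ — then some $d_i(V_a,V_b)=1$, so the pair is complete in color $i$. (We do not need the $E_i$ to be pairwise disjoint, so we need not replace $E_i$ by $E_i\setminus(E_1\cup\cdots\cup E_{i-1})$, which would raise its complexity to $\Theta(mt)$ and is therefore not an option.) Consequently it suffices to produce an equitable partition of $V$ into $K\le(m/\varepsilon)^c$ parts such that, for each fixed $i$, at most an $(\varepsilon/m)$‑fraction of the pairs of parts is non‑homogeneous with respect to $E_i$: a union bound over the $m$ colors then shows that at most an $\varepsilon$‑fraction of the pairs of parts fails to be complete in some color.

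\emph{The engine.} Iterating the single‑relation theorem, or taking the common refinement of $m$ separate partitions, multiplies the number of parts at each stage and so costs a factor exponential in $m$; we must instead treat all relations together. The key point is that although there are up to $mt$ defining polynomials in all, each fiber $N_i(u):=\{v\in\RR^d:(u,v)\in E_i\}$ is a semi‑algebraic subset of $\RR^d$ of complexity at most $t$ — a bound that does not depend on $m$. Thus $\mathcal F=\{N_i(u):u\in V,\ 1\le i\le m\}$ is a family of at most $mn$ semi‑algebraic sets in $\RR^d$, each of complexity $\le t$, and every arrangement estimate one needs — the number of cells cut out by $k$ members of $\mathcal F$, the number of members whose boundary crosses a given cell after polynomial partitioning, the dual shatter (VC‑density) bound — holds with $m$‑independent exponents, $m$ entering only through $|\mathcal F|\le mn$. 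Re‑running the Fox--Pach--Suk argument for $\mathcal F$ — partition $\RR^d$ by a polynomial of degree $D$ so that each of the $D^{O(d)}$ cells meets the boundary of at most an $O(D^{-\delta})$‑fraction of the members of $\mathcal F$, refine $V$ along these cells (and rebalance into equitable parts as in \cite{FPS}), then double‑count, for each $i$ separately, the pairs of cells that can fail to be homogeneous with respect to $E_i$ — yields an equitable partition into $K=D^{O(d)}$ parts in which, for every $i$, the number of pairs of parts non‑homogeneous with respect to $E_i$ is at most $m^{O(1)}K^2/D^{\delta}$, where $\delta=\delta(d,t)>0$.

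\emph{Conclusion.} Choosing $D=(m/\varepsilon)^{O(1/\delta)}$ large enough that $m^{O(1)}/D^{\delta}\le\varepsilon/m$ makes the number of bad pairs for each $E_i$ at most an $(\varepsilon/m)$‑fraction of all pairs of parts, while $K=D^{O(d)}=(m/\varepsilon)^{c}$ for a constant $c=c(d,t)$; with the reduction above this proves Theorem~\ref{reg1}. The main obstacle is the engine step: one has to check that the polynomial‑partitioning and double‑counting argument really localizes to each color — that a cell being crossed by fibers of $E_j$ does not destroy its homogeneity with respect to $E_i$ — and, crucially, that summing the count of bad cell‑pairs over all $m$ relations loses only a polynomial factor in $m$ (because $|\mathcal F|$ is linear in $m$) rather than the exponential factor produced by naive iteration. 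The density inequality, the union bound, and the equitable rebalancing are routine.
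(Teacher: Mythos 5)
Your overall strategy coincides with the paper's: pool the defining surfaces of all $m$ relations into one family of at most $tmn$ objects, each of complexity depending only on $d$ and $t$, apply a single space decomposition whose parameter is polynomial in $m/\varepsilon$ (the paper uses the cutting lemma, Lemma~\ref{cut}, with $r=tm/\varepsilon$ rather than a partitioning polynomial, but this is immaterial), and rebalance into an equitable partition at the end. Your reduction (homogeneity with respect to every color, together with $E_1\cup\cdots\cup E_m={V\choose 2}$, forces completeness in some color) is correct and is implicitly how the paper argues; the paper just obtains completeness in some color directly for uncrossed pairs, so it never needs the per-color error $\varepsilon/m$, but that only changes the constant exponent.

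The genuine gap is in the engine step, exactly where you flagged uncertainty. From ``each cell is crossed by at most an $O(D^{-\delta})$-fraction of $\mathcal{F}$'' one cannot conclude that ``for every $i$, at most $m^{O(1)}K^2/D^{\delta}$ pairs of parts are non-homogeneous with respect to $E_i$'' when the parts are just the cells (or equal-size chunks of cells). A pair of parts is already spoiled by a \emph{single} point of one part whose $E_i$-fiber crosses the cell containing the other part, and the number of such crossing points per cell is of order $\varepsilon n$ (in your normalization, $O(mnD^{-\delta})$ with $D$ independent of $n$), which dwarfs the number $K$ of parts; so the crossing bound by itself gives no nontrivial bound on the number, or the weight, of bad pairs of parts. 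The paper's proof of Theorem~\ref{reg} needs an extra idea precisely here: each cell $\Delta_{\ell}$ is further partitioned into equivalence classes according to which cells $\Delta_j$ are crossed by $\sigma_{i,1}(x)\cup\cdots\cup\sigma_{i,t}(x)$, simultaneously for all colors $i$; these crossing-pattern sets $\Delta_{\ell,i,j}$ have bounded complexity by quantifier elimination, and the Milnor--Thom theorem bounds the number of classes per cell by $c_5(ms)^d$ (this refinement is why the exponent becomes $5d^2$ rather than $2d$). After this refinement, ``some point of a part gives a crossing surface'' becomes ``every point of the part does,'' so bad parts can be charged to crossing points and one gets the weighted bound $\sum_{\mathrm{bad}}|V_a||V_b|\le\varepsilon n^2$; only then is the partition made equitable, as in the deduction of Theorem~\ref{reg1} from Theorem~\ref{reg}. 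So your plan is the right one, and the same pooling idea is what the paper exploits, but the per-color bound you assert as the output of ``re-running \cite{FPS}'' is exactly the statement that requires this additional refinement-by-crossing-pattern and the weighted charging argument; as written, the double count over pairs of cells does not go through.
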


In Section~\ref{genram}, we apply this result to solve a problem of Erd\H os and Shelah~\cite{ES81}) in the semi-algebraic setting. Let $d,t,p,q,n$ be positive integers, $p\ge 3$, and $2\le q\le {p\choose 2}$. Let $f_{d,t}(n,p,q)$ be the minimum $m$ such that there exists a semi-algebraic $m$-coloring of the edges of the complete graph of $n$ vertices (with parameters $d$ and $t$, as above) with the property that any $p$ vertices induce at least $q$ distinct colors. Our next theorem precisely determines the smallest $q$ for a given $p$, where  $f_{d,t}(n,p,q)$ changes from $\log n$ to a power of $n$.

\begin{theorem}\label{genramsey}
For fixed integers $d,t\geq 1$, there is a $c = c(d,t) > 0$ such that for $p \geq 3$, we have

$$f_{d,t}(n,p,\lceil\log p\rceil + 1) \geq \Omega\left(n^{\frac{1}{c\log^2p}}\right).$$

\noindent Moreover, for $t\geq 4$,

$$f_{d,t}(n,p, \lceil\log p\rceil) \leq O(\log n).$$

\end{theorem}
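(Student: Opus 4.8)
The plan is to prove the two parts separately. For the lower bound $f_{d,t}(n,p,\lceil\log p\rceil + 1) \geq \Omega(n^{1/(c\log^2 p)})$, I would argue by contradiction using the multicolor regularity lemma (Theorem \ref{reg1}). Suppose there is a $(p,\lceil\log p\rceil+1)$-coloring of $K_n$ by semi-algebraic relations $E_1,\dots,E_m$ with $m$ small, say $m \leq n^{1/(c\log^2 p)}$ for a suitably large constant $c$. Apply Theorem \ref{reg1} with $\varepsilon$ a small constant (depending only on $p$, say $\varepsilon = 1/(2p^2)$) to obtain an equitable partition into $K \leq (m/\varepsilon)^{c(d,t)}$ parts in which all but an $\varepsilon$-fraction of pairs of parts are monochromatic. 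Build an auxiliary "reduced" complete graph on the $K$ parts, coloring a pair of parts by the color of the monochromatic relation between them (discarding the few irregular pairs). Since the fraction of bad pairs is less than $1/\binom{p}{2}$, a standard counting/Turán-type argument shows there exist $p$ parts that are pairwise monochromatic in the reduced graph, say using only colors from some set $S$. Now pick one vertex from each of these $p$ parts; they induce at most $|S|$ colors in the original coloring. To reach a contradiction with the $(p,q)$ requirement we need $|S| \leq \lceil \log p\rceil$, i.e. the $p$ chosen parts span at most $\lceil\log p\rceil$ colors. This is where the main work lies: a naive application gives only that we can find $p$ parts spanning few colors if $K$ is huge, but we need the quantitative relationship $K \geq$ tower-free bound forcing the color count down to $\lceil\log p\rceil$. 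The key point is that with $m$ colors, a Ramsey-type bound says any $m$-coloring of $K_N$ with $N$ large contains $p$ vertices spanning at most $\lceil\log p\rceil$ colors once $N \geq m^{O(\log^2 p)}$ roughly — this follows from iterating the pigeonhole/Ramsey argument $\lceil \log p\rceil$ times, each step doubling the number of guaranteed vertices while fixing one color, which requires $N$ to be polynomial in $m$ with exponent $O(\log p)$ per step. Combined with $K \leq (m/\varepsilon)^{c(d,t)}$, choosing $c$ in the theorem statement large enough relative to $c(d,t)$ and the Ramsey exponent forces a contradiction, since $K$ is polynomially bounded in $m$ while we need it superpolynomial. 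The main obstacle is getting the exponents to line up: one must verify that the semi-algebraic structure (via Theorem \ref{reg1}'s polynomial bound, rather than a tower bound) is exactly what makes $K$ polynomial in $m$, which in turn is what pins the threshold at $\lceil\log p\rceil + 1$.

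For the upper bound $f_{d,t}(n,p,\lceil\log p\rceil) \leq O(\log n)$ when $t \geq 4$, I would give an explicit construction. The natural candidate: take $V$ to be $n$ points on the moment curve (or simply $V = \{1,2,\dots,n\} \subset \RR$), and define $\lceil \log n\rceil$ colors by the binary digits — for each $k$, let $E_k$ be the relation "the most significant bit in which the integer labels of $u$ and $v$ differ is bit $k$." Each $E_k$ is semi-algebraic of bounded complexity (comparing coordinates and checking which dyadic interval structure the pair falls into; degree and number of polynomials bounded by an absolute constant, which is why $t \geq 4$ suffices). This is essentially the standard "binary" coloring showing $f(n,p,\lceil\log p\rceil) \leq O(\log n)$: among any $p$ vertices, sort them; the colors appearing are determined by the at most $p-1$ consecutive "split levels," but by a counting argument the number of distinct split levels among $p$ sorted integers is at most $\lceil \log p\rceil$ — indeed, think of the $p$ points as leaves of a binary trie; the edges of the trie touched by the convex hull of these leaves, restricted to levels, give at most $\lceil\log p\rceil$ distinct depths because at each depth the $p$ leaves are distributed among dyadic blocks and the number of depths at which a "new split" occurs is logarithmic in $p$. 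I would verify this combinatorial claim carefully (it is the crux of why the classical upper bound construction works), then check that realizing it with points in $\RR^d$ and semi-algebraic relations costs only a bounded increase in complexity, giving $t \geq 4$. The main obstacle here is purely bookkeeping: confirming that the dyadic-split coloring is genuinely semi-algebraic of complexity at most $t$ for the stated $t$, since "which bit differs first" must be encoded by polynomial inequalities, which one does by writing the $n$ points at positions $i/n$ and expressing the split level through comparisons of the form $\lfloor 2^k x\rfloor$ — this needs a careful choice of point coordinates (e.g. place point $i$ at a real encoding its binary expansion) so that the relation is a genuine semi-algebraic set rather than merely a combinatorial one.
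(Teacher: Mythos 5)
Your lower-bound plan has a genuine gap at exactly the step you flag as ``where the main work lies.'' After one application of Theorem \ref{reg1} with constant $\varepsilon$, the reduced coloring on the $K$ parts is just an arbitrary (abstract) coloring with at most $m$ colors, and the ``Ramsey-type bound'' you then invoke --- that any $m$-coloring of $K_N$ with $N\geq m^{O(\log^2 p)}$ contains $p$ vertices spanning at most $\lceil\log p\rceil$ colors --- is false for general colorings. Indeed, if it were true you would not need Theorem \ref{reg1} at all: applied directly to the original coloring it would make $f(n,p,\lceil\log p\rceil+1)$ polynomial in $n$ for \emph{abstract} colorings, contradicting the constructions of Mubayi and of Conlon--Fox--Lee--Sudakov cited in the introduction, which give $(p,p-1)$-colorings with only $n^{o(1)}$ colors; since $p-1\geq\lceil\log p\rceil+1$ for $p\geq 4$, in those colorings no $p$ vertices span as few as $\lceil\log p\rceil$ colors even though $n\geq m^{C\log^2 p}$. (Your quantitative logic is also off: $m^{O(\log^2 p)}$ is polynomial in $m$ for fixed $p$, so there is no ``polynomial versus superpolynomial'' tension, and the regularity lemma only upper-bounds $K$.) The point of the paper's proof is that the semi-algebraic structure must be used at every scale, not once: it defines $s$-layered sets (two \emph{isomorphic} $(s-1)$-layered halves joined completely by one relation, hence using at most $s$ colors in total) and proves by induction on $s$ that if $m\leq n^{1/(cs^2)}$ then an $s$-layered set of size $2^s$ exists. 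At each level it reapplies Theorem \ref{reg1} \emph{inside the parts} with the tiny parameter $\varepsilon=m^{-s}$, so that Tur\'an's theorem (Theorem \ref{turan}) yields $m^{s-1}+1$ pairwise monochromatic parts --- enough to pigeonhole two parts containing isomorphic $(s-1)$-layered sets. The isomorphism requirement is what keeps the color count at $s$ instead of doubling at each merge, and it is exactly what your single application with $\varepsilon=1/(2p^2)$ and only $p$ good parts cannot supply.

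For the upper bound your construction is in essence the paper's (a most-significant-scale coloring of points on a line), but two repairs are needed. First, the paper realizes it by scale separation rather than bit extraction: $V_{i+1}=V_i\cup(V_i+C)$ with $C>10\max V_i$, and $E_{i+1}=\{(u,v): C/2<|u-v|<2C\}$, so each color class is given by two degree-two inequalities (complexity at most $4$); encoding ``first differing bit'' via expressions like $\lfloor 2^k x\rfloor$ is not semi-algebraic. Second, your combinatorial justification is stated in the wrong direction: you claim any $p$ points have \emph{at most} $\lceil\log p\rceil$ split levels, which is false (the points $1,2,4,\ldots,2^{p-1}$ have $p-1$ distinct split levels) and in any case not what a $(p,\lceil\log p\rceil)$-coloring requires. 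What is needed, and true, is that any $p$ points induce \emph{at least} $\lceil\log p\rceil$ colors: viewing the points as leaves of a binary trie, if the branching nodes occupy only $k$ distinct depths then there are at most $2^k$ leaves, so $k\geq\lceil\log p\rceil$.
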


In~\cite{distdist}, we studied a geometric instance of this problem, where every set of $p$ points induces at least $q$ {\em distinct distances}.
\smallskip

Our paper is organized as follows. In the next section, we describe the Cutting Lemma of Chazelle {\em et al.}, which is the main geometric tool used in all proofs. In Section~\ref{section3}, we establish Theorem~\ref{color}. Section~\ref{structure} contains the proof of our multicolored semi-algebraic regularity lemma, Theorem~\ref{reg1}, which is then applied in the following section to deduce Theorem~\ref{genramsey}. We end this paper with some concluding remarks.

\section{The cutting lemma}\label{cuttinglemma}

The main tool we use to prove Theorems \ref{color} and \ref{reg1} is commonly referred to as the \emph{cutting lemma}, which we now recall.  A set $\Delta \subset \RR^d$ is \emph{semi-algebraic} if there are polynomials $g_1,\ldots, g_t$ and a boolean formula $\Phi$ such that

$$A = \{x \in \RR^d: \Phi(g_1(x) \geq 0;\ldots; g_t(x) \geq 0) = 1\}.$$

\noindent We say that a semi-algebraic set in $d$-space has \emph{description complexity} at most $t$ if the number of inequalities is at most $t$, and each polynomial $g_i$ has degree at most $t$.  Let $\sigma\subset \RR^d$ be a \emph{surface} in $\RR^d$, that is, $\sigma$ is the zero set of some polynomial $h \in \RR[x_1,\ldots, x_d]$.  The \emph{degree} of a surface $\sigma = \{x \in \RR^d: h(x) = 0\}$ is the degree of the polynomial $h$.  We say that the surface $\sigma\subset \RR^d$ \emph{crosses} a semi-algebraic set $\Delta$ if $\sigma\cap \Delta \neq \emptyset$ and $\Delta \not\subset \sigma$.

Let $\Sigma$ be a collection of surfaces in $\RR^d$, each having bounded degree.  A \emph{$(1/r)$-cutting} for $\Sigma$ is a family $\Psi$ of disjoint (possibly unbounded) semi-algebraic sets of bounded complexity such that

\begin{enumerate}

\item each $\Delta \in \Psi$ is crossed by at most $|\Sigma|/r$ surfaces from $\Sigma$, and

\item the union of all $\Delta \in \Psi$ is $\RR^d$.

\end{enumerate}

\noindent In \cite{chazelle}, Chazelle \emph{et al}.~(see also \cite{koltun}) proved the following.

\begin{lemma}[Cutting lemma]\label{cut}
Let $\Sigma$ be a multiset of $N$ surfaces in $\RR^d$, each surface having degree at most $t$, and let $r$ be an integer parameter such that $1\leq r \leq N$.   Then there is a constant $c_1 = c_1(d,t)$ such that $\Sigma$ admits a $(1/r)$-cutting $\Psi$, where $|\Psi| \leq c_1r^{2d}$, and each semi-algebraic set $\Delta \in \Psi$ has complexity at most $c_1$.

\end{lemma}

\noindent We note that the original statement of Chazelle \emph{et al}.~\cite{chazelle} and Koltun~\cite{koltun} is stronger.  Namely, they also guarantee that the number of cells in the cutting $\Psi$ is at most $r^{2d-4 + \epsilon}$ for $d\geq 4$.  Here, for simplicity, we use the weaker bound of $c_1r^{2d}$, as stated above.

\section{Multicolor Ramsey numbers for small cliques\\--Proof of Theorem~\ref{color}}\label{section3}

Theorem \ref{color} will easily follow from Theorem \ref{secmulti} below.  For integers $p_1,\ldots, p_m \geq 2$, $d,t \geq 1$, let $R_{d,t}(p_1,\ldots, p_m)$ be the minimum integer $n$ with the following property.   Every complete graph $K_n$, whose $n$ vertices lie in $\RR^d$ and whose edges are colored with $m$ colors such that each color class is defined by a semi-algebraic relation of description complexity $t$, contains a monochromatic copy of $K_{p_k}$ in color $k$ for some $1\leq k \leq m$.

\begin{theorem}\label{secmulti}

For any $d,t\ge 1$ and $p\ge 3$, there exists a constant $c=c(d,t,p)$ satisfying the following condition. For any $m$ integers $p_1,\ldots,p_m\le p$, we have

$$R_{d,t}(p_1,\ldots, p_m) \leq 2^{c\sum_{k =1 }^m p_k}.$$

\end{theorem}

\begin{proof}  Fix $d,t\geq 1$, $p \geq 3$ and set $c= c(d,t,p)$ to be a large constant that will be determined later.  We will show that $R_{d,t}(p_1,\ldots, p_m) \leq 2^{c \sum_{k=1}^m p_k}$ by induction on $s = \sum_{k=1}^m p_k$.  The base case $s \leq 10\cdot 2^{10dtp}$ follows for $c$ sufficiently large.

Now assume that the statement holds for $s' < s$.  Set $n = 2^{cs}$ and let $V$ be an $n$-element point set in $\RR^d$ equipped with semi-algebraic relations $E_1,\ldots,E_m\subset {V\choose 2}$ such that ${V\choose 2} = E_1\cup \cdots \cup E_m$ and each $E_k$ has complexity at most $t$.  Let us remark the an edge $uv$ may have several colors.  We will show that there is a subset $S\subset V$ of size $p_k$ such that ${S\choose 2} \subset E_k$ for some $k \in [m]$, in other words, we will find a monochromatic copy of $K_{p_k}$ in color $k$ for some $k \in [m]$.   Throughout the proof, we will let $c_1$ be as defined in Lemma \ref{cut}.

For each relation $E_k$, there are $t$ polynomials $g_{k,1},\ldots,g_{k,t}$ of degree at most $t$, and a Boolean function $\Phi_k$ such that

$$uv \in E_k \hspace{.5cm}\Leftrightarrow\hspace{.5cm} \Phi_k(g_{k,1}(u,v) \geq 0,\ldots,g_{k,t}(u,v)\geq 0) = 1.$$

\noindent For $ 1 \leq k \leq m,  1\leq \ell \leq t, v \in V$, we define the surface $\sigma_{k,\ell}(v) = \{ x \in \RR^d: g_{k,\ell}(v,x)  = 0\}$.

Before we continue, let us briefly sketch the idea of the proof.  We start by applying Lemma~\ref{cut} (the cutting lemma) to  $\Sigma = \{\sigma_{k,\ell}(v): k\in[m], \ell \in [t],v \in V\}$  and obtain a space partition which induces a partition of the vertex set $V  = V_1\cup \cdots \cup V_K$.  If there is a ``large" part $V_j$ with many distinct colors appearing in $V_j\times (V\setminus V_j)$, then we show that $V_j$ induces few distinct colors, and by induction we can find a monochromatic copy of $K_{p_k}$ for some $k\in [m]$.  If none of the ``large" parts has the above property, the colors of nearly all edges can be defined by much fewer polynomial inequalities, i.e., by a much small set of surfaces $\Sigma'\subset \Sigma$ .  Now we can repeat.

In what follows, we spell out these ideas in full detail.   Set $m_0 = m$ and define $m_i = 4d\log(c_1m_{i-1}t)$ for $i > 0$.  We will establish the following claim.

\begin{claim}
Let $V$ and $E_1,\ldots, E_m \subset {V\choose 2}$ be defined as above.  Then we will recursively find either

\begin{enumerate}

\item a monochromatic copy of $K_{p_k}$ in color $k$ for some $k \in [m]$, or

\item a function $\chi_i:V\rightarrow 2^{[m]}$ such that $|\chi_i(v)| \leq m_{i}$, and the number of edges $uv \in {V\choose 2}$ with the property that for one of its endpoints, say $u$, no color assigned to $uv$ belongs to $\chi_i(u)$, is at most $\frac{4n^2}{tm_{i-1}}$.  We will refer to these edges as bad at stage $i$.  All edges that are not bad are called good at this stage, meaning that, there is a color $k$ appearing on $uv$ such that $k \in \chi_i(u)$, and there is a color $k'$ appearing on $uv$ such that $k' \in \chi_i(v)$.

\end{enumerate}

\end{claim}

\begin{proof}  We start by setting $\chi_0(v) = [m]$ for all $v \in V$, and $m_0 = m$.  Having found $\chi_i$ with the properties above, we will produce $\chi_{i+1}$ as follows.  We have $m_{i + 1} = 4d\log(c_1m_{i}t)$, and let us assume that $m_{i} > (8c_1dtp)^2$.  Hence, there are at most $ \frac{4n^2}{tm_{i-1}}$ bad edges.   Let $\Sigma$ be the set of surfaces $\sigma_{k,\ell}(v)$, where $v \in V$, $k \in \chi_i(v)$, and $1 \leq \ell \leq t$.  This implies that $|\Sigma| \leq nm_it$.

We apply Lemma \ref{cut} to $\Sigma$ with parameter $r = (tm_i)^2$ to obtain a $(1/(tm_i)^2)$-cutting $\Psi  = \{\Delta_{1}, \Delta_{2},\ldots, \Delta_{K_0}\}$, such that $K_0 \leq c_1(tm_i)^{4d}$.  Hence, we have a partition $\mathcal{P}_0 : V = V_1\cup \cdots \cup V_{K_0}$, where $V_j = V\cap \Delta_j$ for $\Delta_{j} \in \Psi$.   For each part $V_j$ of size greater than $2n/(tm_i)$, we (arbitrarily) partition $V_j$ into parts of size $\lfloor 2n/(tm_i)\rfloor$ and possibly one additional part of size less than $2n/(tm_i)$.  Let $\mathcal{P}: V = V_1\cup \cdots \cup V_{K}$ be the resulting partition, where $K \leq 2c_1(tm_i)^{4d}$ and $|V_j| \leq 2n/(tm_i)$ for all $j$.

\medskip

Now we define $\chi_{i + 1}(v)$ for all $v \in V$.

\medskip

\noindent \emph{Case 1}.  If $v \in V_j$ for some $V_j$ with $|V_j|< \frac{n}{2c_1(tm_i)^{4d + 1}}$, we set $\chi_{i + 1}(v) = \emptyset$.

\medskip

\noindent \emph{Case 2}.  Suppose $v \in V_j$ such that $|V_j| \geq \frac{n}{2c_1(tm_i)^{4d + 1}}$.  In order to define $\chi_{i + 1}(v)$, we need some preparation.  Let $\Delta_j \in \Psi$ such that $V_j \subset \Delta_j$.  We define $X_j \subset V\setminus V_j$ to be the set of vertices from $V\setminus V_j$ that gives rise to a surface in $\Sigma$ that crosses $\Delta_j$.  Hence $|X_j| \leq n/(tm_i)$. Fix a vertex $v \in V\setminus \{V_j,X_j\}$.  Since none of the surfaces of the form $\sigma_{k,\ell}(v)$, where $k \in \chi_i(v)$ and $\ell \in \{1,\ldots, t\}$, cross $\Delta_j$, either $v\times V_j$ is monochromatic with color $k$ for some $k \in \chi_i(v)$, or none of the colors in $\chi_i(v)$ appear in $v\times V_j$.  Let $S_j$ be the set of vertices $v \in V\setminus \{V_j,X_j\}$ satisfying the former condition and let $T_j$ denote the set of vertices $v \in V\setminus \{V_j,X_j\}$ satisfying the latter one.  Since there are at most $4n^2/(tm_{i-1})$ bad edges, we have

$$|T_j| \frac{n}{2c_1(tm_i)^{4d + 1}} \leq \frac{4n^2}{tm_{i-1}},$$

\noindent which implies

$$|T_j| \leq \frac{8nc_1(tm_i)^{4d+1}}{tm_{i-1}} \leq \frac{n}{tm_i},$$

\noindent where the last inequality follows from the assumption $m_i > (8c_2dtp)^2$.  Now, suppose there are at least $m_{i + 1} = 4d\log(c_1tm_i)$ distinct colors between $V_j$ and $S_j$.  Let $I = \{k_1,\ldots, k_{m_{i+1}}\} \subset [m]$ be the set of these $m_{i + 1}$ distinct colors.  Then there are $m_{i + 1}$ vertices $v_1,\ldots, v_{m_{i  + 1}} \in S_j$, possibly with repetition, such that $v_w\times V_j$ is monochromatic with color $k_w \in I$, for each $w \in \{1,\ldots, m_{i + 1}\}$.  Hence, if $V_j$ contains a monochromatic copy of $K_{p_k -1}$ in color $k \in I$, we would have a monochromatic copy of $K_{p_k}$ in color $k$.  On the other hand, if $V_j$ does not contain a monochromatic copy of $K_{p_k - 1}$ in color $k$ for no $k \in I$, then, using that

 $$|V_j| \geq \frac{n}{2c_1(tm_i)^{4d+1}} > 2^{cs - 8d\log(c_1m_i t)} > 2^{c( s- m_{i + 1})} = 2^{c\left(\sum_{k \in I}(p_k - 1) + \sum_{k\not\in I} p_k\right)} $$

\noindent for a sufficiently large $c$, we obtain by induction that there is a monochromatic copy of $K_{p_k}$ in color $k$ where $k \not\in I$.

Therefore, we can assume that the number of distinct colors between $V_j$ and $S_j$ is less than $m_{i + 1} = 4d\log(c_1 m_{i}t)$.  For every vertex $v \in V_j$, define $\chi_{i + 1}(v)$ as the set of all colors that appear on the edges belonging to $v\times S_j$.

Now that we have defined $m_{i + 1}$ and $\chi_{i+1}$ such that $|\chi_{i + 1}(v)| \leq m_{i+1}$ for all $v\in V$, it remains to show that the number of edges $uv \in {V\choose 2}$ with the property that for one of its endpoints, say $u$, no color assigned to $uv$ belongs to $\chi(u)$, is at most $\frac{4n^2}{tm_i}$.  Let $B\subset {V\choose 2}$ be the collection of such edges. Notice that if $uv \in B$, then either

\noindent

\begin{enumerate}

\item both $u$ and $v$ lie inside the same part in the partition $\mathcal{P}$, or

\item $u$ or $v$ lies inside a part $V_j$ such that $|V_j| < \frac{n}{2c_1(tm_i)^{4d + 1}}$, or

\item $u \in V_j$ with $|V_j|\geq \frac{n}{2c_1(tm_i)^{4d + 1}}$ and $v \in X_j\cup T_j$, or

\item $v \in V_j$ with $|V_j|\geq \frac{n}{2c_1(tm_i)^{4d + 1}}$ and $u \in X_j\cup T_j$.

\end{enumerate}

\noindent The number of edges of type 1 is at most $\frac{tm_i}{2}\left(\frac{2n}{tm_i}\right)^2/2 = n^2/(tm_i)$.  The number of edges of type~2 is at most

$$\left(\frac{n}{2c_1(tm_i)^{4d+1}}\right)K\cdot n \leq \frac{n^2}{tm_i}.$$

\noindent Since $|X_j|,|T_j| \leq n/(tm_i)$, the number of edges of types 3 and 4 is at most

$$\sum_j |V_j|\frac{2n}{tm_i} \leq \frac{2n^2}{tm_i}.$$

\noindent Hence, $|B| \leq \frac{4n^2}{tm_i}$.  Therefore, either we have found a monochromatic copy of $K_{p_k}$ in color $k$ for some $k \in [m]$, or we have found $m_{i + 1}$ and $\chi_{i + 1}$ with the desired properties.\end{proof}

Let $w$ be the minimum integer such that $m_{w} \leq (8c_1dtp)^2$. Then either we have found a monochromatic copy of $K_{p_{k}}$ in color $k$ for some $k\in [m]$, or we have obtained $m_{w}$ and $\chi_w$ with the desired properties.  Since there are at most $4n^2/(tm_{w-1}) < n^2/8$ bad edges, there is a vertex $v\in V$ incident to at least $n/2$ good edges.  Moreover, since $\chi_w(v) \leq m_w \leq (8c_1dtp)^2$, at least $\frac{n}{2(8c_1dtp)^2}$ of these edges incident to $v$ have color $k'$ for some color $k' \in \chi_w(v)$.  Let $S\subset V$ be the set of endpoints of these edges.  If $S$ contains a monochromatic copy of $K_{p_{k'}-1}$ in color $k'$, then we are done.  On the other hand, if $S$ does not contain a monochromatic copy of $K_{p_{k'} - 1}$ in color $k'$, and using the lower bound

$$|S|\geq \frac{n}{2(8c_1dtp)^2} = \frac{2^{cs}}{2(8c_1dtp)^2} \geq 2^{c(\sum_{k \neq k'}p_k + (p_{k'}-1))},$$

\noindent for $c = c(d,t,p)$ sufficiently large, we conclude by induction that $S$ contains a monochromatic copy of $K_{p_{k}}$ for some $k\neq k'$.  This completes the proof of Theorem \ref{secmulti}.\end{proof}

\section{Multicolor semi-algebraic regularity lemma\\--Proof of Theorem~\ref{reg1}}\label{structure}

First, we prove the following variant of Theorem \ref{reg1}, which easily implies Theorem \ref{reg1}.

\begin{theorem}
\label{reg}
For any $\varepsilon > 0$, every $n$-element point set $V\subset \RR^d$ equipped with semi-algebraic binary relations $E_1,\ldots, E_m \subset {V\choose 2}$ such that ${V\choose 2} = E_1\cup \cdots \cup E_m$ and each $E_k$ has complexity at most $t$, can be partitioned into $K\leq c_2(\frac{m}{\varepsilon})^{5d^2}$ parts $V = V_1\cup \cdots \cup V_K$, where $c_2 = c_2(d,t)$, such that

$$\sum\frac{|V_{i}||V_{j}|}{n^2} \leq \varepsilon,$$

\noindent where the sum is taken over all pairs $(i,j)$ such that $(V_{i},V_{j})$ is not complete with respect to $E_{k}$ for all $k = 1,\ldots, m$.
\end{theorem}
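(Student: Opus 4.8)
The plan is to run a single "partition refinement along semi-algebraic cells" argument, driven by the cutting lemma (Lemma~\ref{cut}), applied to the union of all surfaces coming from all $m$ relations simultaneously. First I would collect, for each relation $E_\ell$, the defining polynomials $g_1^{(\ell)},\dots,g_{s_\ell}^{(\ell)}\in\RR[x_1,\dots,x_{2d}]$ of degree at most $t$. For a fixed point $v\in V$, substituting the coordinates of $v$ into the last $d$ variables yields polynomials in the first $d$ variables whose zero sets are surfaces $\sigma_{\ell,j}(v)\subset\RR^d$ of degree at most $t$. Let $\Sigma$ be the multiset of all such surfaces over all $v\in V$, all $\ell\in[m]$, and all $j\le s_\ell$; then $|\Sigma|\le tmn =: N$. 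The point of this construction is that if $\Delta\subset\RR^d$ is a cell not crossed by any surface $\sigma_{\ell,j}(v)$, then for every $u\in\Delta$ the sign vector $(\operatorname{sign} g_j^{(\ell)}(u,v))_j$ is constant, hence the truth value of the Boolean formula $\Phi_\ell$ — i.e.\ whether $(u,v)\in E_\ell$ — is constant on $\Delta$. Consequently, between a cell $\Delta$ and a point $v$ whose surfaces miss $\Delta$, the relation $E_\ell$ is "homogeneous" for every $\ell$.

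Next I would apply Lemma~\ref{cut} with parameter $r$ to be chosen, subdividing $\RR^d$ into at most $c_1 r^{2d}$ semi-algebraic cells $\Delta_1,\dots,\Delta_M$, each of complexity at most $c_1$, each crossed by at most $|\Sigma|/r \le tmn/r$ surfaces of $\Sigma$. This gives a partition of $V$ by putting $v,v'$ in the same class when they lie in the same cell; discard empty classes, so $K\le c_1 r^{2d}$. To bound the "bad" pairs, fix two classes $V_i\subset\Delta_i$ and $V_j\subset\Delta_j$; I call $(V_i,V_j)$ bad if it is not complete with respect to any single $E_\ell$. A key sub-claim is: if no surface $\sigma_{\ell,j'}(v)$ for any $v\in V_j$ crosses $\Delta_i$, then for each $u\in V_i$ and $v\in V_j$ the membership $(u,v)\in E_\ell$ depends only on which cell $v$ lies in (it's constant in $u\in\Delta_i$), so $V_i\times V_j$ is homogeneous for every $\ell$; since $\bigcup_\ell E_\ell$ is everything, some $E_\ell$ must then contain all of $V_i\times V_j$, so $(V_i,V_j)$ is not bad. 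Therefore a bad pair $(V_i,V_j)$ forces: some vertex $v\in V_j$ has a surface crossing $\Delta_i$, OR (by symmetry) some $u\in V_i$ has a surface crossing $\Delta_j$. Counting weighted by $|V_i||V_j|/n^2$: summing over all cells $\Delta_i$ the total weight of cells $\Delta_j$ containing a point whose surface crosses $\Delta_i$ is at most $\sum_i \tfrac{|V_i|}{n}\cdot\tfrac{(\text{\# surfaces crossing }\Delta_i)}{n}\le \sum_i \tfrac{|V_i|}{n}\cdot\tfrac{tm}{r} = \tfrac{tm}{r}$, since each of the $\le tmn/r$ surfaces crossing $\Delta_i$ is contributed by one vertex $v$, whose class has $|V_j|/n$ fraction of the mass — more carefully one gets $\sum_{\text{bad}}\tfrac{|V_i||V_j|}{n^2}\le 2tm/r$ after accounting for the symmetric term. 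Choosing $r = \lceil 2tm/\varepsilon\rceil$ makes this at most $\varepsilon$, and gives $K\le c_1 r^{2d}\le c_2(m/\varepsilon)^{2d}\le c_2(m/\varepsilon)^{5d^2}$ for a suitable $c_2=c_2(d,t)$.

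The main obstacle — and the place I would be most careful — is the weighted counting of bad pairs: I need each surface that crosses a cell $\Delta_i$ to be "charged" correctly to the mass of the class of the vertex that produced it, and I must not double count surfaces coming from the same vertex (a single $v\in V_j$ contributes up to $s_\ell\le t$ surfaces per relation, hence up to $tm$ surfaces total, but only $|V_j|/n$ worth of mass). The clean way is: $\sum_i\sum_{j:\,\exists v\in V_j,\,\exists \text{surface of }v\text{ crossing }\Delta_i}\tfrac{|V_i||V_j|}{n^2}\le \sum_i\tfrac{|V_i|}{n}\sum_{v:\,\text{some surface of }v\text{ crosses }\Delta_i}\tfrac{1}{n}\le \sum_i\tfrac{|V_i|}{n}\cdot\tfrac{\#\{\text{surfaces crossing }\Delta_i\}}{n}\le \sum_i\tfrac{|V_i|}{n}\cdot\tfrac{tm}{r}=\tfrac{tm}{r}$, where the middle inequality uses that each crossing vertex contributes at least one crossing surface. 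Doubling for the symmetric case and choosing $r$ as above finishes the proof. A minor secondary point is that the cells $\Delta_i$ from Lemma~\ref{cut} partition $\RR^d$, so every $v\in V$ lies in exactly one, making the vertex partition well defined; and one should note that Theorem~\ref{reg} asks only for a partition with small bad weight (not an equitable one), so no further balancing is needed here — that step is deferred to the derivation of Theorem~\ref{reg1}.
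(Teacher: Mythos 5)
There is a genuine gap, and it sits exactly where you said you would be most careful: the weighted counting of bad pairs. Your partition is by cells of the cutting only, and your chain of inequalities requires
$\sum_{j:\,\exists v\in V_j\text{ with a surface crossing }\Delta_i}\frac{|V_j|}{n}\;\le\;\sum_{v:\,\text{some surface of }v\text{ crosses }\Delta_i}\frac{1}{n},$
i.e.\ that the \emph{total mass} of every class containing at least one crossing vertex is bounded by the \emph{number} of crossing vertices. This is false for a cell-based partition: a class $V_j$ may contain a single vertex whose surface crosses $\Delta_i$ together with arbitrarily many vertices whose surfaces do not, so the left side charges $|V_j|$ while the right side only pays $1$. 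The cutting lemma bounds the number of crossing vertices per cell by $tmn/r$, but it gives no control on the population of the cells that happen to contain a crossing vertex, so choosing $r=\lceil 2tm/\varepsilon\rceil$ does not yield the claimed $\varepsilon$ bound. The paper's proof avoids this precisely by \emph{refining} each cell's point set $U_\ell$ into equivalence classes according to the crossing pattern of the points (for each relation $E_i$ and each target cell $\Delta_j$, whether the point's surfaces cross $\Delta_j$); the sets $\Delta_{\ell,i,j}$ recording these patterns are shown to be semi-algebraic of bounded complexity via quantifier elimination, and the number of classes is bounded by the Milnor--Thom theorem. Within such a class the crossing condition is all-or-nothing, so one crossing vertex forces all vertices of the class to be crossing, and the charge of $|V_j|$ to the crossing-vertex count becomes legitimate. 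This refinement is also why the theorem's exponent is $5d^2$ rather than the $2d$ your argument produces; getting a better bound than the statement is a symptom of the missing step.

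A secondary (fixable) inaccuracy: your sub-claim that the one-sided condition ``no surface of any $v\in V_j$ crosses $\Delta_i$'' already makes $V_i\times V_j$ homogeneous for every $E_\ell$ is not correct as stated — it only gives, for each fixed $v\in V_j$, that $\{v\}\times V_i$ is homogeneous, and different $v$'s may be complete to $V_i$ in different (overlapping) relations. To conclude homogeneity of the whole pair you need the symmetric condition as well (no surface of any $u\in V_i$ crosses $\Delta_j$), and then the two-step chain $(u,v)\in E_\ell\Leftrightarrow(u',v)\in E_\ell\Leftrightarrow(u',v')\in E_\ell$. Your eventual disjunction ``bad $\Rightarrow$ some $v\in V_j$ crosses $\Delta_i$ or some $u\in V_i$ crosses $\Delta_j$'' is the correct consequence and only costs a factor $2$ in the count, so this part is harmless; the counting issue above is the real obstruction, and repairing it essentially forces you to reintroduce the paper's sign-pattern refinement.
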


\begin{proof}  For each relation $E_k$, let $g_{k,1},\ldots, g_{k,t} \in \mathbb{R}[x_1,\ldots, x_{2d}]$ be polynomials of degree at most $t$, and let $\Phi_k$ be a boolean formula such that

$$uv \in E_k \hspace{.5cm}\Leftrightarrow\hspace{.5cm} \Phi_k(g_{k,1}(u,v) \geq 0;\ldots;g_{k,t}(u,v) \geq 0) = 1.$$

For each point $x\in \RR^d$, $k\in \{1,\ldots, m\}$, and $\ell \in \{1,\ldots, t\}$, we define the surface $$\sigma_{k,\ell}(x) = \{y \in \RR^d: g_{k,\ell}(x,y) = 0\}.$$  Let $\Sigma$ be the family of $tmn$ surfaces in $\RR^d$ defined by

 $$\Sigma = \{\sigma_{k,\ell}(u): u \in V, 1 \leq  k\leq m, 1\leq \ell\leq t\}.$$

 We apply Lemma \ref{cut} to $\Sigma$ with parameter $r = tm/\varepsilon$ to obtain a $(1/r)$-cutting $\Psi$, where $|\Psi| = s \leq c_1\left(\frac{tm}{\varepsilon}\right)^{2d}$, such that each semi-algebraic set $\Delta_i \in \Psi$ has has complexity at most $c_1$, where $c_1$ is defined in Lemma~\ref{cut}.  Hence, at most $tmn/r  = \varepsilon n$ surfaces from $\Sigma$ cross $\Delta_{i}$ for every $i$.  This implies that at most $\varepsilon n$ points in $V$ give rise to at least one surface in $\Sigma$ that cross $\Delta_{i}$.

Let $U_{i} = V\cap \Delta_{i}$ for each $i \leq s$.  We now partition $\Delta_{i}$ as follows. For $k \in \{1,\ldots , m\}$ and $j \in \{1,\ldots, s\}$, define $\Delta_{i,j,k} \subset \RR^d$ by

 $$\Delta_{i,j,k} = \{x \in \Delta_{i}: \sigma_{k,1}(x) \cup \cdots \cup \sigma_{k,t}(x) \textnormal{ crosses } \Delta_j\}.$$

 \begin{observation}
For any $i$, $j$, and $k$, the semi-algebraic set $\Delta_{i,j,k}$ has complexity at most $c_3 = c_3(d,t)$.

 \end{observation}

 \begin{proof}

Set $\sigma_k(x) =  \sigma_{k,1}(x) \cup \cdots \cup \sigma_{k,t}(x)$, which is a semi-algebraic set with complexity at most $c_4 = c_4(d,t)$.  Then

$$\Delta_{i,j,k} = \left\{x \in \Delta_{i}: \begin{array}{l}
                                            \exists y_1 \in \RR^d \textnormal{ s.t. } y_1\in \sigma_k(x)\cap \Delta_{j}, \textnormal{ and } \\
                                             \exists y_2 \in \RR^d \textnormal{ s.t. } y_2\in   \Delta_{j}\setminus \sigma_k(x).
                                          \end{array}\right\}.$$

We can apply quantifier elimination (see Theorem 2.74 in \cite{basu}) to make $\Delta_{i,j,k}$ quantifier-free, with description complexity at most $c_3 = c_3(d,t)$. \end{proof}

Set $\mathcal{F}_{i} = \left\{\Delta_{i,j,k}: 1 \leq k \leq m, 1 \leq j \leq s \right\}$.  We partition the points in $U_{i}$ into equivalence classes, where two points $u,v \in U_{i}$ are equivalent if and only if $u$ belongs to the same members of $\mathcal{F}_{i}$ as $v$ does.  Since $\mathcal{F}_{i}$ gives rise to at most $c_3|\mathcal{F}_{i}|$ polynomials of degree at most $c_3$, by the Milnor-Thom theorem (see \cite{mat} Chapter 6), the number of distinct sign patterns of these $c_3|\mathcal{F}_{i}|$ polynomials is at most $\left(50c_3(c_3|\mathcal{F}_{i}|)\right)^d.$  Hence, there is a constant $c_5 = c_5(d,t)$ such that $U_{i}$ is partitioned into at most $c_5(ms)^d$ equivalence classes.  After repeating this procedure to each $U_{i}$, we obtain a partition of our point set $V =  V_1\cup \cdots \cup V_K$ with

$$K \leq sc_5(ms)^d =c_5m^ds^{d+1} \leq c_5 t^{2d(d+1)}c_1^{d+1} \left(\frac{m}{\varepsilon}\right)^{5d^2} = c_2 \left(\frac{m}{\varepsilon}\right)^{5d^2},$$
where we define $c_2 = c_5 t^{2d(d+1)}c_1^{d+1}$.

For fixed $i$, consider the part $V_i$.  Then there is a semi-algebraic set $\Delta_{w_i}$ obtained from Lemma~\ref{cut} such that $U_{w_i} = V\cap \Delta_{w_i}$ and  $V_i \subset U_{w_i} \subset \Delta_{w_i}$.  Now consider all other parts $V_j$ such that not all of their elements are related to every element of $V_i$ with respect to any relation $E_k$ where $1\leq k \leq m$.  Then each point $u\in V_j$ gives rise to a surface in $\Sigma$ that crosses $\Delta_{w_i}$.  By Lemma \ref{cut}, the total number of such points in $V$ is at most $\varepsilon n$.  Therefore, we have

$$\sum\limits_j |V_i||V_j| = |V_i| \sum\limits_j |V_j| \leq |V_i|\varepsilon n,$$

\noindent where the sum is over all $j$ such that $V_i\times V_j$ is not contained in the relation $E_{k}$ for any $k$.  Summing over all $i$, we have

$$\sum\limits_{i,j} |V_{i}||V_{j}| \leq \varepsilon n^2,$$

\noindent where the sum is taken over all pairs $i,j$ such that $(V_{i},V_{j})$ is not complete with respect to $E_{k}$ for all $k$.\end{proof}

\begin{proof}[Proof of Theorem \ref{reg1}.] Apply Theorem \ref{reg} with approximation parameter $\varepsilon/2$. Hence, there is a partition $\mathcal{Q}:V=U_1 \cup \cdots \cup U_{K'}$ into $K' \leq (m/\varepsilon)^c$ parts with $c=c(d,t)$ and $\sum |U_{i}||U_{j}|\leq (\varepsilon/2) |V|^2$, where the sum is taken over all pairs $(i,j)$ such that $(U_{i},U_{j})$ is not complete with respect to $E_{k}$ for all $k$.

Let $K=8\varepsilon^{-1}K'$. Partition each part $U_i$ into parts of size $|V|/K$ and possibly one additional part of size less than $|V|/K$. Collect these additional parts and divide them into parts of size $|V|/K$ to obtain an equitable partition $\mathcal{P}:V=V_1 \cup \cdots \cup V_K$ into $K$ parts. The number of vertices of $V$ which are in parts $V_i$ that are not contained in a part of $\mathcal{Q}$ is at most $K'|V|/K$. Hence, the fraction of pairs $V_{i} \times V_{j}$ with not all $V_{i},V_{j}$ are subsets of parts of $\mathcal{Q}$ is at most $2K'/K=\varepsilon/4$. As $\varepsilon/2+\varepsilon/4<\varepsilon$, we obtain that less than an $\varepsilon$-fraction of the pairs of parts of $\mathcal{P}$ are not complete with respect to any relation $E_1,\ldots, E_m$.\end{proof}

\section{Generalized Ramsey numbers for semi-algebraic colorings\\
--Proof of Theorem~\ref{genramsey}}\label{genram}

Due to the lack of understanding of the classical Ramsey number $R(p;m)$, Erd\H os and Shelah (see \cite{ES81}) introduced the following generalization, which was studied by Erd\H os and Gy\'arf\'as in \cite{EG}.

\begin{definition}
For integers $p$ and $q$ with $2 \leq q \leq {p\choose 2}$, a $(p,q)$\emph{-coloring} is an edge-coloring of a complete graph in which every $p$ vertices induce at least $q$ distinct colors.
\end{definition}

\noindent Let $f(n,p,q)$ be the minimum integer $m$ such that there is a $(p,q)$-coloring of $K_n$ with at most $m$ colors.  Here, both $p$ and $q$ are considered fixed integers, where $p\geq 3$, $2 \leq q \leq {p\choose 2}$, and $n$ tends to infinity.  Trivially, we have $f(n,p,{p\choose 2}) = {n\choose 2}$, and at the other end, estimating $f(n,p,2)$ is equivalent to estimating $R(p;m)$ since $f(n,p,2)$ is the inverse of $R(p;m)$.  In particular,

\begin{equation}\label{sbound}\Omega\left(\frac{\log n}{\log\log n}\right) \leq f(n,3,2) \leq O(\log n).\end{equation}

\noindent  Erd\H os and Gy\'arf\'as \cite{EG} determined certain ranges for $q \in \{2,3,\ldots, {p\choose 2}\}$ for which $f(n,p,q)$ is quadratic, linear, and subpolynomial in $n$.  In particular, they showed that

$$\Omega\left(n^{\frac{1}{p-2}}\right) \leq f(n,p,p) \leq O\left( n^{\frac{2}{p-1}} \right),$$

\noindent which implies that $f(n,p,q)$ is polynomial in $n$ for $q \geq p$.  Surprisingly, estimating $f(n,p,p-1)$ is much more difficult.  They \cite{EG} asked for $p$ fixed if $f(n,p,p-1)=n^{o(1)}$ . The trivial lower bound is $f(n,p,p-1) \geq f(n,p,2) \geq \Omega\left(\frac{\log n}{\log \log n}\right)$, which was improved by several authors \cite{KM,FS}, and it is now known \cite{CFLS} that $f(n,p,p-1) \geq \Omega(\log n)$.  In the other direction, Mubayi \cite{mubayi} found an elegant construction which implies $f(n,4,3) \leq e^{O(\sqrt{\log n})}$, and later, Conlon \emph{et al.}~\cite{CFLS} gave another example which implies $f(n,p,p-1) \leq e^{(\log n)^{1 - 1/(p-2)+o(1)}}$. Hence, it is now known that $f(n,p,p-1)$ does not grow as a power in $n$.

Here, we study the variant of the function $f(n,p,q)$ for point sets $V\subset \RR^d$ equipped with semi-algebraic relations.  Let $f_{d,t}(n,p,q)$ be the minimum $m$ such that there is a $(p,q)$-coloring of $K_n$ with $m$ colors, whose vertices can be chosen as points in $\RR^d$, and each color class can defined by a semi-algebraic relation on the point set with complexity at most $t$.  We note that here we require that each edge receives \emph{exactly} one color. Clearly, we have $f(n,p,q) \leq f_{d,t}(n,p,q)$.  Theorem~\ref{genramsey} stated in the Introduction shows the exact value of $q$ for which $f_{d,t}(n,p,q)$ changes from $\log n$ to a power of $n$.
\medskip

In the rest of this section, we prove Theorem \ref{genramsey}.  Let $V$ be a set of points in $\RR^d$ equipped with semi-algebraic relations $E_1,\ldots, E_m$ such that each $E_k$ has complexity at most $t$, ${V\choose 2} = E_1\cup \cdots\cup E_m$, and $E_k\cap E_{\ell} = \emptyset$ for all $k\neq \ell$.  Let $S_1,S_2 \subset V$ be $q$-element subsets of $V$.  We say that $S_1$ and $S_2$ are \emph{isomorphic}, denoted by $S_1 \simeq S_2$, if there is a bijective function $h:S_1 \rightarrow S_2$ such that for $u,v \in S_1$ we have $uv \in E_k$ if and only if $h(u)h(v) \in E_k$.

Let $S\subset V$ be such that $|S| = 2^s$ for some positive integer $s$.  We say that $S$ is $s$-\emph{layered} if $s = 1$ or if there is a partition $S = S_1\cup S_2$ such that $|S_1| = |S_2| = 2^{s-1}$, $S_1$ and $S_2$ are $(s-1)$-layered, $S_1\simeq S_2$, and for all $u \in S_1$ and $v\in S_2$ we have $uv \in E_k$ for some fixed $k$.  Notice that given an $s$-layered set $S$, there are at most $s$ relations $E_{k_1},\ldots, E_{k_s}$ such that ${S\choose 2} \subset E_{k_1}\cup\cdots \cup E_{k_s}$.   Hence, the lower bound in Theorem \ref{genramsey} is a direct consequence of the following result.

\begin{theorem}
Let $s\geq 1$ and let $V$ be an $n$-element point set in $\RR^d$ equipped with semi-algebraic relations $E_1,\ldots, E_m$ such that each $E_k$ has complexity at most $t$,  $E_1\cup\cdots \cup E_m = {V\choose 2}$, and $E_k\cap E_{\ell} = \emptyset$ for all $k\neq \ell$.  If $m \leq n^{\frac{1}{cs^2}}$, then there is a subset $S\subset V$ such that $|S| = 2^s$ and $S$ is $s$-layered, where $c = c(d,t)$.
\end{theorem}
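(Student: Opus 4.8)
The plan is to build the $s$-layered set recursively, doubling the size at each step, using Theorem~\ref{reg1} to control the structure. I would set up an iteration producing a nested sequence of sets $V = W_0 \supseteq W_1 \supseteq \cdots \supseteq W_s$, where at stage $\ell$ I maintain a set $W_\ell$ together with an $\ell$-layered subset $S_\ell \subseteq W_\ell$ of size $2^\ell$, with the additional invariant that every vertex of $W_\ell$ "looks the same" to $S_\ell$ — i.e. for each $u \in S_\ell$ there is a fixed color $E_{i(u)}$ such that every $w \in W_\ell$ satisfies $(u,w) \in E_{i(u)}$, and moreover $W_\ell$ is homogeneous enough that we can find a large copy of $S_\ell$ inside it to pair with. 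Actually, the cleanest formulation: I would maintain that $W_\ell$ can be partitioned (or contains a large part) such that $S_\ell$ embeds and there is a second disjoint isomorphic copy of $S_\ell$ in $W_\ell$ joined to the first by a single color. To get this, apply Theorem~\ref{reg1} to $W_{\ell-1}$ with a suitably small $\varepsilon$: most pairs of parts are monochromatic. A constant fraction of parts then has the property that it is monochromatically joined to many other parts; pick one such part $A$ and a large collection of parts $B_1, \dots, B_r$ each joined to $A$ by some (part-dependent) color. Since there are only $m$ colors, a $1/m$ fraction of the $B_j$ share a common color $E_i$ to $A$; take their union $B$. Now recurse inside $A$ to build $S_\ell$ (size $2^{\ell}$ wanted from an $(\ell-1)$-layered $S_{\ell-1}$ of size $2^{\ell-1}$), and recurse inside $B$ to find an isomorphic copy; joining them across $E_i$ gives an $\ell$-layered set.

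More carefully, the right bookkeeping is a single simultaneous recursion: to produce an $\ell$-layered set of size $2^\ell$ it suffices to find two disjoint sets $A, B$ of size $\geq n^{1-\delta}$ (for appropriate $\delta$) that are completely joined by a single color $E_i$, and then find an $(\ell-1)$-layered set of size $2^{\ell-1}$ in $A$, together with an isomorphic copy in $B$ — but to guarantee the isomorphic copy, I instead build the $(\ell-1)$-layered set in $A$, which is determined by a sequence of at most $\ell-1$ colors and a "pattern," and then locate the same pattern in $B$. The way to make "the same pattern appears in $B$" automatic is to do the regularity partition on $A \cup B$ simultaneously and track which relation holds between which parts, but a slicker route is: since $A$ and $B$ are each large, apply the whole construction to $A$ to get $S_{\ell-1} \subseteq A$; then observe $S_{\ell-1}$ is isomorphic to its "type," and apply the construction to $B$ asking for that specific type. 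To make the latter work we need the recursive statement to be quantified over a target type — so I would actually prove, by induction on $\ell$, the stronger statement: for every $\ell$-layered abstract type $\tau$ of size $2^\ell$ realizable by these relations, if $|W| \geq n / m^{c' \ell^2}$ then $W$ contains a copy of $\tau$ (provided such a copy exists globally, or unconditionally with a compactness/counting argument). Then at the inductive step, having chosen $A, B$ joined by $E_i$, find $S_{\ell-1}$ of some type $\tau'$ in $A$; apply induction with target $\tau'$ to $B$; glue across $E_i$; this yields type $\tau = $ ($\tau'$ doubled via $E_i$), completing the induction.

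The size bookkeeping: at each of the $s$ steps we pass from $W_{\ell-1}$ to a part $A$ (and to $B$). Theorem~\ref{reg1} with parameter $\varepsilon$ gives $K \leq (m/\varepsilon)^c$ parts, so $|A| \geq |W_{\ell-1}| / (m/\varepsilon)^c$ roughly, and after the averaging over $m$ colors, $|B| \geq |W_{\ell-1}| / (m (m/\varepsilon)^c)$. Choosing $\varepsilon$ a small constant (say $1/8$, enough so that a positive fraction of parts are "good"), each step costs a factor of $m^{O(c)}$ in size. Over $s$ steps the total loss is $m^{O(cs)}$, which is at most $n^{1/(c' s^2)} \cdot m^{O(cs)}$... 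I need to be a little careful: we want that after $s$ halvings the set is still nonempty, i.e. $n / m^{O(cs)} \geq 2^s$, which holds when $m \leq n^{1/(cs^2)}$ for suitable $c$ — the extra factor of $s$ in the exponent $cs^2$ versus $cs$ is the slack absorbing $2^s$ and constants. So the constant $c = c(d,t)$ in the theorem statement is essentially $C$ times the regularity constant, where $C$ is an absolute constant.

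The main obstacle, and the step I'd spend the most care on, is ensuring that the $(\ell-1)$-layered set found in $A$ can be matched by an isomorphic copy in $B$ — the naive approach of recursing independently in $A$ and $B$ does not give isomorphic outputs. Phrasing the induction over a target type $\tau$ (as above) resolves this but requires that every abstract $\ell$-layered type of bounded size is actually realized whenever we need it; this is handled because we are constructing $\tau$ bottom-up, so at the moment we invoke induction on $B$ with target $\tau'$, we already possess a concrete realization of $\tau'$ inside $A \subseteq V$, hence $\tau'$ is realizable, and the induction hypothesis (which should be stated as: "if $W$ is large and $\tau'$ is realizable in $V$, then $\tau'$ is realizable in $W$") applies. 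A secondary technical point is that Theorem~\ref{reg1} produces a partition into monochromatic pairs of parts, but we want a large part $A$ with many parts monochromatically attached to it by a common color; this follows by a double-counting over the $\leq \varepsilon K^2$ bad pairs plus pigeonhole over the $m$ colors, and I'd want the constant $\varepsilon$ chosen so that the "good" parts $A$ (those with $\geq K/2$ monochromatic neighbors, say) are plentiful and each has $\geq K/(2m)$ neighbors in a single color.
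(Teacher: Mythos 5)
The crucial step of your argument does not work: the strengthened induction hypothesis you propose --- ``if $W\subseteq V$ is large and the type $\tau'$ is realizable in $V$ (say, we hold a copy of it inside $A$), then $\tau'$ is realizable in $W$'' --- is false. Realizability of a type is not inherited by large subsets, because different regions of the point set can carry entirely different color structures. A concrete counterexample: let $V\subset\RR$ consist of two far-apart clusters $X$ and $Y$ of $n/2$ points each, with all pairs inside $X$ in color $1$, all pairs inside $Y$ in color $2$, and all cross pairs in color $3$ (these are semi-algebraic of bounded complexity, and $m=3$ easily satisfies $m\leq n^{1/(cs^2)}$). Your construction may well select $A\subseteq X$ and $B\subseteq Y$ joined completely in color $3$, recurse in $A$ to produce an $(\ell-1)$-layered set whose type uses only color $1$, and then ask $B$ for that same type --- but $B$ contains no pair of color $1$ at all. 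So the step ``find $S_{\ell-1}$ of type $\tau'$ in $A$, then find a copy of $\tau'$ in $B$'' cannot be justified by any induction of the form you state, and gluing two independently found layered sets in $A$ and $B$ fails for exactly the reason you yourself flagged (they need not be isomorphic). This is the missing idea, not a technicality.

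The paper resolves the matching problem differently, and the difference is reflected in the quantitative slack you noticed. Instead of two monochromatically joined parts, it applies Theorem~\ref{reg1} with the much smaller parameter $\varepsilon=1/m^{s}$, so that by Tur\'an's theorem one finds $m^{s-1}+1$ parts that are \emph{pairwise} complete in some color. Recursing independently inside each of these parts produces $m^{s-1}+1$ many $(s-1)$-layered sets, and since an $(s-1)$-layered set has at most $m^{s-1}$ possible isomorphism types (its type is determined by the sequence of $s-1$ colors joining the two halves at each level), the pigeonhole principle yields two parts whose layered sets are isomorphic; these two parts are already monochromatically joined, so their union is $s$-layered. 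This pigeonhole over types is what forces $\varepsilon=m^{-s}$, hence $K\leq m^{O(s)}$ parts and a loss of $m^{O(s)}$ per level, giving the exponent $cs^2$ in the theorem --- it is not slack absorbing constants, as your constant-$\varepsilon$ bookkeeping (which would give $m^{O(s)}$ total loss) presumes. To repair your write-up you would need to replace the target-type induction by this many-parts-plus-pigeonhole argument, or find some other mechanism that guarantees isomorphic copies in both sides of the monochromatic pair.
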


\begin{proof}

We proceed by induction on $s$.  The base case $s = 1$ is trivial.  For the inductive step, assume that the statement holds for $s' < s$.  We will specify $c = c(d,t)$ later. We start by applying Theorem \ref{reg1} with parameter $\varepsilon = \frac{1}{m^{s}}$ to the point set $V$, which is equipped with semi-algebraic relations $E_1,\ldots, E_m$, and obtain an equitable partition $\mathcal{P}:V = V_1\cup\cdots\cup V_K$, where

$$K \leq c_2\left(\frac{m}{\varepsilon}\right)^{5d^2} \leq c_2 m^{10sd^2}, $$

\noindent and $c_2 = c_2(d,t)$.  Since all but an $\varepsilon$ fraction of the pairs of parts in $\mathcal{P}$ are complete with respect to $E_{k}$ for some $k$, by Tur\'an's theorem, there are $m^{s-1} +1$ parts $V'_i \in \mathcal{P}$ such that each pair $(V'_i,V'_j) \in \mathcal{P} \times \mathcal{P}$ is complete with respect to some relation $E_{k}$.    Since $\mathcal{P}$ is an equitable partition, we have $|V'_i| \geq \frac{n}{c_2m^{10d^2s}}$.  By picking $c = c(d,t)$ sufficiently large, we have

 $$|V_i'|^{\frac{1}{c(s-1)^2}} \geq \left(\frac{n}{c_2m^{10d^2s}}\right)^{\frac{1}{c(s-1)^2}} \geq  m^{\frac{cs^2 - 10c_2d^2s}{c(s-1)^2}} \geq m.$$

\noindent  By the induction hypothesis, each $V'_i$ contains an $(s-1)$-layered set $S_i$ for $i \in \{1,\ldots, m^{s-1} + 1\}$.  By the pigeonhole principle, there are two $(s-1)$-layered sets $S_i,S_j$ such that $S_i \simeq S_j$.  Since $S_i\times S_j \subset E_{k}$ for some $k$, the set $S = S_i\cup S_j$ is an $s$-layered set.  This completes the proof.\end{proof}

To prove the upper bound for $f_{d,t}(n,p,\lceil\log p\rceil )$, when $d\geq 1$ and $t\geq 100$, it is sufficient to construct a $2^m$-element point set $V\subset \RR$ equipped with semi-algebraic relations $E_1,\ldots, E_m$ that is $m$-layered.  More precisely, for each integer $m \geq 1$, we construct a set $V_m$ of $2^m$ points in $\RR$ equipped with semi-algebraic relations $E_1,\ldots, E_m$ such that

\begin{enumerate}

\item $V_m$ with respect to relations $E_1,\ldots, E_m$ is $m$-layered,

\item $E_1\cup \cdots \cup E_m = {V_m\choose 2}$ is a partition,

\item each $E_i$ has complexity at most four, and

\item each $E_i$ is \emph{shift invariant}, that is $uv \in E_i$ if and only if $(u + c, y + c) \in E_i$ for $c \in \RR$.

\end{enumerate}

We start by setting $V_1 = \{1,2\}$ and defining $E_1 = \{u,v \in V_1: |u - v| = 1\}$.  Having defined the point set $V_i$ and relations $E_1,\ldots, E_i$, we define $V_{i + 1}$ and $E_{i + 1}$ as follows.  Let $C = C(i)$ be a sufficiently large integer such that $C > 10\max_{u \in V_i} u$.  Then we have $V_{i + 1} = V_i \cup (V_i + C)$, where $V_i + C$ is a translated copy of $V_i$.  We now define the relation $E_{i + 1}$ by

$$uv \in E_{i + 1} \hspace{.5cm}\Leftrightarrow \hspace{.5cm} C/2 < |u-v| < 2C.$$

Hence, $V_{i+1}$ with respect to relations $E_1,\ldots, E_{i + 1}$ satisfies the properties stated above and is clearly $(i + 1)$-layered.  One can easily check that any set of $p$ points in $V_m$ induces at least $\lceil \log p \rceil$ distinct relations (colors).

\medskip

Let us remark that the arguments above hold for semi-algebraic relations $E_1,\ldots, E_m$ that are not necessarily disjoint if one defines a  $(p,q)$-coloring as follows.  Given a coloring $\chi:{V(K_n)\choose 2}\rightarrow 2^{[m]}$ on the edges of $K_n$, where each edge receives \emph{at least} one color among $[m]$, $\chi$ is a \emph{$(p,q)$-coloring} if for every set $S\subset V$ of size $p$, no matter how you choose one color in $\chi(uv)$ for each edge $uv \in {S\choose 2}$, $S$ will induce at least $q$ distinct colors.

\section{Concluding remarks}

In \cite{suk}, it was shown that $R_{1,t}(3;m) > (1681)^{m/7}$ for $t > 5$, thus implying that the upper bound in Theorem \ref{color} is tight up to a constant factor in the exponent.  This can be improved as follows.  Let $C(p) = \lim_{m \to \infty} R(p;m)^{1/m}$. Note that this limit exists by considering product colorings, but may be finite or infinite.  Then for each $C < C(p)$, there is a $t = t(C,p)$, such that for all $m$ sufficiently large we have

$$R_{1,t}(p;m) > C^m.$$

\noindent Indeed, take a fixed coloring of the edges of $K_N$ which realizes $R(p;m_0)>C^{m_0}$, and recursively blow up this graph by introducing $m_0$ new colors at each stage.  Then this coloring can be realized semi-algebraically in $\mathbb{R}$ with $t=O(m_0^2)$ linear constraints for each color class based on distances.

\end{document}